\documentclass[letterpaper, 10 pt]{article}  

\usepackage[total={6in, 9in}]{geometry}

\title{\LARGE \bf
Homogeneous Formulation of Convex Quadratic Programs for Infeasibility Detection}

\usepackage{graphicx}
\usepackage{amsmath}
\usepackage{amssymb}
\usepackage[ruled,vlined,linesnumbered]{algorithm2e}
\usepackage{amsthm}
\usepackage{enumerate}

%

\allowdisplaybreaks

\newcommand{\R}{\mathbb{R}}
\newcommand{\Sy}{\mathbb{S}}

\newcommand{\yhat}{\widehat{y}}
\newcommand{\xihat}{\widehat{\xi}}
\newcommand{\nuhat}{\widehat{\nu}}
\newcommand{\omegahat}{\widehat{\omega}}
\newcommand{\tauhat}{\widehat{\tau}}

\newcommand{\sigmamin}{\sigma^{\textnormal{min}}}
\newcommand{\sigmamax}{\sigma^{\textnormal{max}}}
\newcommand{\Diag}{\textnormal{Diag}}

\newcommand{\half}{\frac{1}{2}}

\newcommand{\setN}{{\cal N}}
\newcommand{\xbar}{\overline{x}}
\newcommand{\lambdabar}{\overline{\lambda}}
\newcommand{\sbar}{\overline{s}}

\newcommand{\ipopt}{\textsc{IPOPT}}

\newtheorem{theorem}{Theorem}
\newtheorem{assumption}{Assumption}
\newtheorem{lemma}{Lemma}
\newtheorem{remark}{Remark}
%
%
%
\author{Arvind U Raghunathan
\thanks{A.U. Raghunathan is with Mitsubishi Electric Research Laboratories, 201 Broadway, Cambridge, MA 02139, USA. {\tt \small raghunathan@merl.com}}%
}

\begin{document}

\maketitle

\begin{abstract}
Convex Quadratic Programs (QPs) have come to play a central role in the computation of control action for constrained dynamical systems. 
In this paper, we present a novel Homogeneous QP (HQP) formulation which is obtained by embedding the original QP in a larger space.  The key properties of the HQP are: (i) is always feasible, (ii) an optimal solution to QP can be readily obtained from a solution to HQP, and (iii) infeasibility of QP corresponds to a particular solution of HQP. An immediate consequence is that all the existing algorithms for QP are now also capable of robustly detecting infeasibility. In particular, we present an Infeasible Interior Point Method (IIPM) for the HQP and show polynomial iteration complexity when applied to HQP.  A key distinction with prior IPM approaches is that we do not need to solve second-order cone programs. Numerical experiments on the formulation are provided using existing codes.  
\end{abstract}
\section{Introduction}\label{sec:introduction}

Optimization algorithms are widely used in the control for constrained dynamic systems. In particular, the solution of convex Quadratic Programs (QPs) is a key ingredient of optimization algorithms.  Convex QPs arise for example in the Model Predictive Control of linear systems, switched linear systems and nonlinear systems~\cite{borrelli_bemporad_morari_2017,Mayne2020}.  The past two decades have witnessed the development of a number of algorithms for the solution of convex QPs arising in the context of optimization-based control. 
Initial work on QP algorithms were based on solving a system of equations representing the optimality conditions such as Interior Point Method (IPM) for QP~\cite{RaoWrightRawlings1998}, active-set methods~\cite{BartlettBiegler2006,FerreauBockDiehl2008} and more recently, IPM for Second Order Cone Programs (SOCPs)~\cite{DomahidiChuBoydECC2013}. In the last decade there has been interest in the development of first-order approaches such as gradient projection~\cite{Richter2012}, dual gradient projection~\cite{GiselssonACC2013,GPAD2014}, and splitting methods~\cite{RaghunathanACC2014,GiselssonBoydCDC2014,PatrinosStellaCDC2014,Ghadimi2015} and iterative second-order approaches~\cite{QuirynenKnyazev2018}.  Recently, there has been work on employing these methods within Branch \& Bound (B\&B) algorithms for the solution of Mixed Integer Quadratic Programs (MIQPs) that arise in MPC for switched systems~\cite{FrickDomahidi2015,NaikBemporadIFAC2017,StellatoNaikBemporadECC2018,HespanholQuirynen2019}.

A critical feature required of QP solvers for real-time applications is the ability to detect infeasibility and provide a graceful handling of the same. 
Infeasibility handling increases in importance when solving MIQPs in a B\&B setting as it is expected that QPs resulting from the fixing of a subset of binary variables will likely be infeasible.  Detecting such infeasible QPs and pruning the search tree in B\&B is essential for computational efficiency of the MIQP solver. 

Infeasibility of a QP is determined by the identification of a ray that is feasible for the dual and makes the dual objective unbounded. First-order primal-dual algorithms are capable of detecting infeasibility.  Recent results on detecting infeasibility using first-order algorithms include~\cite{RaghunathanCDC2014,Banjac2019,LiaoMcPherson2020}. A dual QP formulation, if it can be constructed explicitly, will allow the determination of a ray of infeasibility. However, such an explicit dual construction requires strong convexity of the QP.  
Active-set methods typically employ a \emph{feasibility} phase to first determine a feasible point and then proceed to an \emph{optimization} phase which produces the optimal solution. Failure of the feasibility phase allows to pronounce a QP as being infeasible~\cite{NocedalWrightBook06}. The work performed in this phase is identical to that required for the \emph{optimization} phase. When employing inexact linear algebra the first-phase is typically not employed and as a consequence the ability of such approaches to certify infeasibility is not clear. 
IPM algorithms, on the other hand, are not all capable of producing a certificate of infeasibility~\cite{WrightIPMBook,WongThesis}. IPMs based on Homogeneous Self-Dual (HSD) embedding~\cite{YeToddMizuno1994,XuHungYe1996} are known to produce a certificate of infeasibility for Linear Programs (LPs).  This technique has been implemented in MOSEK~\cite{MOSEKLP} and has also been extended to handle SOCPs. To detect infeasible QPs with IPMs, the QPs must first be formulated as a SOCP to which the HSD embedding is then applied. The ECOS solver~\cite{DomahidiChuBoydECC2013} does exactly this for the solution of QPs arising in MPC and can detect infeasibility. 

We present a novel embedding of the QP into a space that is one dimension higher. The embedding is produced by introducing an additional variable that is nonnegative and multiplies the affine terms in the objective and constraints. The objective contribution from the introduced variable is determined so the resulting Homogeneous Quadratic Program (HQP) satisfies the following properties: (i) HQP is always feasible, (ii) an optimal solution to the QP can always be recovered from the optimal solution to the HQP if the introduced variable is positive; and (iii) QP can be certified to be infeasible if the introduced variable is zero. 
More importantly, the HQP formulation can be presented to any QP solver to determine the optimal solution or infer infeasibility in a robust manner.

In this paper, we focus on applying an Infeasible IPM (IIPM) to solve the HQP. We are able to derive polynomial complexity of a standard IIPM when applied to HQP by adapting the analysis of IIPM for linear programs~\cite[Chapter~6]{WrightIPMBook}.  Thus, we have an IIPM with polynomial iteration complexity for inferring optimality or infeasibility of QP without having to solve SOCPs.

The paper is organized as follows. \S\ref{sec:qpformulation} presents the QP and the assumptions. \S\ref{sec:hqp} presents the embedding of QP into a HQP and shows the equivalence of the formulations. An IIPM for solving HQP and the polynomial iteration complexity are described in \S\ref{sec:iipm}.  We present results in \S\ref{sec:results} when using the HQP formulation with \ipopt.  Conclusions and directions for future work are provided in \S\ref{sec:conclusions}.

\emph{Notation}. The set of reals is denoted by $\R$ and the set of vectors of dimension $n$ by $\R^n$. The set of $n \times n$ symmetric matrices is denoted by $\Sy^n$. For a matrix $A \in \Sy^n$, the notation $A \succcurlyeq (\succ) 0 $ denotes that A is positive semidefinite (definite). Given a vector $u \in \R^n$, $\Diag(u)$ denotes a diagonal matrix with the elements of the vector on the diagonal. The notation $I_n$ denotes the identity matrix of size $n$.

\section{Problem Formulation}\label{sec:qpformulation}

Consider the QP in the form
\begin{subequations}
\begin{align}
    \min_{y \in \R^n}          &\; \half y^TCy + c^Ty \label{qp.obj} \\
    \text{s.t.}     &\; Ey = f \label{qp.eq} \\
                    &\; y \geq 0 \label{qp.nonneg} 
\end{align}\label{qp}
\end{subequations}
where $C \in \Sy^n$, $c \in \R^n$, $E \in \R^{m \times n}$, and $f \in \R^m$.  Note that any QP formulation with finite lower or upper bound on each of the variables can be put into the form in~\eqref{qp} through a linear transformation of variables.  

We make the following assumptions on QP~\eqref{qp}.
\begin{assumption}\label{assume:E}
The matrix $E$ has full row rank of $m$.
\end{assumption}
\begin{assumption}\label{assume:ZCZ}
The matrix $Z^TCZ \succ 0$ where $Z \in \R^{n \times (n-m)}$ is an orthonormal basis for the null-space of $E$, i.e. $Z$ is a basis for $\{ v \,|\, Ev = 0 \}$.
\end{assumption}
Assumption~\ref{assume:E} is not restrictive and can be easily satisfied by removing dependent rows if necessary. Assumption~\ref{assume:ZCZ} requires that $C$ be positive definite when projected onto $Z$. This assumption readily holds for MPC formulations~\cite{RaghunathanCDC2014} and also for certain spectral  relaxations of nonconvex MIQPs~\cite{NohraRaghunathan2021,MPB2021}.  Assumption~\ref{assume:ZCZ} implies that the optimal solution to QP~\eqref{qp} is unique whenever QP~\eqref{qp} is feasible.  Note that the assumptions do not preclude the infeasibility of QP~\eqref{qp}.  The basis $Z$ is typically computed using a QR factorization of $E^T$~\cite{GolubVanLoanBook}. 

We conclude this section with a statement of the conditions for optimality and infeasibility of QP~\eqref{qp}.

A point $y^\star$ minimizes QP~\eqref{qp} if there exist multipliers $\nu^\star \in \R^{m}$, $\xi^\star \geq 0 \in \R^n$, satisfying the first-order optimality conditions
\begin{subequations}\label{qpopt}
  \begin{align}
    Cy^\star + c + E^T \nu^\star - \xi^* &= 0 \label{qpopt.dual}  \\
    Ey^\star &= f \label{qpopt.eq} \\
    0 \leq y^\star &\perp \xi^\star \geq 0. \label{qpopt.compl} 
  \end{align}
\end{subequations}
The conditions~\eqref{qpopt} are necessary and sufficient for a minimizer of QP~\eqref{qp} under Assumption~\ref{assume:ZCZ}.

The QP~\eqref{qp} is infeasible if there exist $\nu^\circ \in \R^{m}$, $\xi^\circ \in \R^{n}$ satisfying
\begin{subequations}\label{qpinfeas}
  \begin{align}
    E^T\nu^\circ - \xi^\circ &= 0 \label{qpinf.c1} \\
    f^T\nu^\circ &= -1 \label{qpinf.c2} \\
    \xi^\circ &\geq 0. \label{qpinf.c3}
  \end{align}
\end{subequations}
The conditions in~\eqref{qpinfeas} are obtained from applying a Theorem of the Alternative~\cite{Mangasarian} to the constraints in~\eqref{qp.eq}-\eqref{qp.nonneg}.

\section{Homogeneous QP Formulation}\label{sec:hqp}

Consider embedding QP~\eqref{qp} into a Homogeneous Quadratic Program (HQP) as
\begin{subequations}\label{hqp}  
  \begin{align}
    \min\limits_{y \in \R^n,\tau \in \R} &\; \half y^TCy + \tau c^Ty + \frac{\theta}{2} (\tau^2-2\tau) \label{hqp.obj}\\
    \text{s.t.} &\; Ey = f\tau \label{hqp.eq} \\
	&\; y \geq 0, \tau \geq 0 \label{hqp.nonneg}
  \end{align}
\end{subequations}
where $\tau \in \R$ is an additional nonnegative variable and $\theta > 0$ is a parameter.  Observe that the objective~\eqref{hqp.obj} is obtained by multiplying the linear term $c^Ty$ in~\eqref{qp.obj} with $\tau$ and appending with the term $(\theta/2)(\tau^2-\tau)$.  The equality constraints~\eqref{hqp.eq} is obtained by multiplying the right-hand side of~\eqref{qp.eq} with $\tau$. \S\ref{sec:condnstheta} provides conditions on $\theta$ that ensures HQP is convex.  \S\ref{sec:computetheta} shows how the parameter $\theta$ can be computed efficiently.  Finally, \S\ref{sec:qphqp} shows that solving HQP allows to recover a solution to QP or declare infeasibility.

\subsection{Conditions on $\theta$}\label{sec:condnstheta}
The parameter $\theta$ is chosen to satisfy two conditions:
\begin{enumerate}
\item $\theta > 2|\theta^\star|$ where $\theta^\star$ is defined as
\begin{align}
\theta^\star 				= \min\limits_{y \in \R^n} &\; \half y^TCy + c^Ty \label{deftheta} \\
								  \text{s.t.} \;& Ey = f. \nonumber			
\end{align}
The parameter $\theta^\star$ is well-defined by Assumption~\ref{assume:ZCZ} and can be computed as $\theta^* = \half \tilde{y}^TC\tilde{y} + c^T\tilde{y}$ where $\tilde{y}$ is obtained by solving a single linear system
\begin{align}
    \begin{pmatrix} 
    C & E^T \\ E & 0 
    \end{pmatrix} \begin{pmatrix} \tilde{y} \\ \tilde{\nu} \end{pmatrix} 
    = \begin{pmatrix} -c \\ f \end{pmatrix}.
\end{align}
Since the nonnegativity constraints~\eqref{qp.nonneg} are ignored in the definition of $\theta^*$~\eqref{deftheta} it follows that
\begin{align}
 -\half\theta < \theta^\star \leq& \half y^TCy + c^Ty \label{theta.ineq} \\
 &\forall\, y \text{ satisfying }  \eqref{qp.eq}-\eqref{qp.nonneg}.
 \nonumber
\end{align}

\item $\theta$ is chosen large enough so that
\begin{align}
					&	\widehat{Z}^T \begin{bmatrix} 	C 			& c \\ 
						c^T 		& \theta \end{bmatrix} \widehat{Z} \succ 0  
																	\label{hqp.pdredhess} 
\end{align}
where $\widehat{Z}$ is a basis for the null space of~\eqref{hqp.eq}.  Such a basis can be obtained as ,
\begin{equation}
 	\widehat{Z} = \begin{bmatrix} Z & d \\
										0	 & 1 				\end{bmatrix} 
	\label{defZhat}
\end{equation}
where $d = E^T(EE^T)^{-1}f$.
\end{enumerate}
The satisfaction of~\eqref{hqp.pdredhess} implies that: 
\begin{enumerate}
    \item the Hessian of the objective in~\eqref{hqp.obj} is positive definite on the null space of the equality constraints~\eqref{hqp.eq}.  
    \item HQP~\eqref{hqp} is convex and the first-order optimality conditions are necessary and sufficient for a minimizer.
\end{enumerate}

\subsection{Computing $\theta$}\label{sec:computetheta}
We will now address the computation of $\theta$ satisfying~\eqref{hqp.pdredhess}.

\begin{lemma}
Suppose Assumptions~\ref{assume:E} and \ref{assume:ZCZ} hold.  Then~\eqref{hqp.pdredhess} holds for all $\theta$ satisfying 
\begin{equation}
	\theta > \frac{1}{\lambda_{\min}(Z^TCZ)}\|Z^T(Cd + c)\|^2 - d^TCd - 2 c^Td. \label{theta4pd}
\end{equation}
\end{lemma}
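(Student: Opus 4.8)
The plan is to compute the Schur complement of the block $\widehat{Z}^T \begin{bmatrix} C & c \\ c^T & \theta \end{bmatrix} \widehat{Z}$ with respect to its leading $(n-m)\times(n-m)$ block and show that a lower bound on $\theta$ makes this Schur complement positive. First I would substitute the explicit basis $\widehat{Z} = \begin{bmatrix} Z & d \\ 0 & 1 \end{bmatrix}$ from~\eqref{defZhat} and carry out the multiplication blockwise. The $(1,1)$ block is $Z^TCZ$, which is positive definite by Assumption~\ref{assume:ZCZ}; the $(1,2)$ block is $Z^T(Cd + c)$; and the $(2,2)$ scalar block is $d^TCd + 2c^Td + \theta$. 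So the matrix in~\eqref{hqp.pdredhess} equals
\begin{equation*}
\begin{bmatrix} Z^TCZ & Z^T(Cd+c) \\ (Cd+c)^TZ & d^TCd + 2c^Td + \theta \end{bmatrix}.
\end{equation*}

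Next I would invoke the standard fact that a symmetric block matrix $\begin{bmatrix} P & q \\ q^T & r \end{bmatrix}$ with $P \succ 0$ is positive definite if and only if the Schur complement $r - q^TP^{-1}q$ is strictly positive. Applying this with $P = Z^TCZ$, $q = Z^T(Cd+c)$, and $r = d^TCd + 2c^Td + \theta$, positive definiteness of~\eqref{hqp.pdredhess} is equivalent to
\begin{equation*}
\theta > \|Z^T(Cd+c)\|_{(Z^TCZ)^{-1}}^2 - d^TCd - 2c^Td,
\end{equation*}
where $\|v\|_{M}^2 = v^TMv$. The final step is to bound the weighted norm from above by the Euclidean one: since $(Z^TCZ)^{-1} \preccurlyeq \lambda_{\min}(Z^TCZ)^{-1} I$, we have $\|Z^T(Cd+c)\|_{(Z^TCZ)^{-1}}^2 \le \frac{1}{\lambda_{\min}(Z^TCZ)}\|Z^T(Cd+c)\|^2$, so any $\theta$ satisfying~\eqref{theta4pd} a fortiori satisfies the exact Schur-complement condition, and hence~\eqref{hqp.pdredhess} holds.

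I do not expect a serious obstacle here; the argument is essentially a Schur-complement computation plus an eigenvalue bound. The only points requiring a little care are: verifying that~\eqref{defZhat} is indeed a valid basis for the null space of~\eqref{hqp.eq} — i.e. that $E d = f$ holds for $d = E^T(EE^T)^{-1}f$, which uses Assumption~\ref{assume:E} so that $EE^T$ is invertible, and that the columns of $\widehat{Z}$ are linearly independent and span $\{(v,\sigma) : Ev = f\sigma\}$ — and keeping the signs straight in the constant term $d^TCd + 2c^Td$ when moving it across the inequality. Since the bound~\eqref{theta4pd} is only a sufficient (not tight) condition, the weakening from the exact $(Z^TCZ)^{-1}$-norm to the $\lambda_{\min}$-based bound is harmless and in fact makes $\theta$ easy to compute once $\lambda_{\min}(Z^TCZ)$ is available.
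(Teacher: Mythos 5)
Your argument is correct and is essentially identical to the paper's proof: substitute $\widehat{Z}$ from~\eqref{defZhat}, reduce~\eqref{hqp.pdredhess} to positivity of the Schur complement with respect to $Z^TCZ$, and relax the $(Z^TCZ)^{-1}$-weighted norm using $\lambda_{\min}(Z^TCZ)$. The additional remarks on verifying that $\widehat{Z}$ is a valid null-space basis are a nice touch but do not change the substance.
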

\begin{proof}
The claim follows from an application of the Schur-complement to the left-hand side of the matrix in~\eqref{hqp.pdredhess}.  Substitute $\widehat{Z}$ from~\eqref{defZhat} in~\eqref{hqp.pdredhess} to obtain
\begin{equation}
					\widehat{Z}^T \begin{bmatrix} 	C 			& c \\ 
						c^T 		& \theta \end{bmatrix} \widehat{Z} 
			= \begin{bmatrix} Z^TCZ & Z^T(Cd + c) \\
							(Cd + c)^TZ & \theta + d^TCd + 2c^Td \end{bmatrix}.
			\label{ZCZ.expand}
\end{equation}
Since $Z^TCZ \succ 0$ (Assumption~\ref{assume:ZCZ}) the condition in~\eqref{hqp.pdredhess} holds provided the Schur-complement w.r.t. $Z^TCZ$ in~\eqref{ZCZ.expand} is positive, i.e.
\begin{equation}
\begin{aligned}
		0 <&\; \theta + d^TCd + 2 c^Td - \\
		&\; (Cd+c)^TZ (Z^TCZ)^{-1} Z^T(Cd + c) \\
\implies \theta >&\;  (Cd+c)^TZ (Z^TCZ)^{-1} Z^T(Cd + c) \\
        &\; - d^TCd - 2 c^Td. 
\end{aligned}\label{defthetamin}
\end{equation}
Using $Z^TCZ \succeq \lambda_{\min}(Z^TCZ) I_{n-m}$ in~\eqref{defthetamin} yields~\eqref{theta4pd}.
\end{proof}

\begin{remark}\label{rem:noZ}
It is not necessary to compute $Z^T(Cd+c)$ explicitly.  Since $\|ZZ^T\| \leq 1$ it is sufficient to choose 
\begin{equation}
	\theta > \frac{1}{\lambda_{\min}(Z^TCZ)}\|Cd + c\|^2 - d^TCd - 2 c^Td \label{theta4pd.a}
\end{equation}
in order to satisfy~\eqref{hqp.pdredhess}.
\end{remark}

\begin{remark}\label{rem:nolammin}
Further, it is not necessary to compute $\lambda_{\min}(Z^TCZ)$.  It is sufficient to have a nontrivial lower bound estimate of $\lambda_{\min}(Z^TCZ)$.  For example, if $C \succ 0$ (as in MPC applications) then $\lambda_{\min}(C) \leq \lambda_{\min}(Z^TCZ)$.  In the context of MIQPs with binary variables, QPs~\eqref{qp} are solved at each node of the B\&B tree.  The QP solved at the child node is a result of fixing a binary variable to $0$ or $1$.  Let us suppose that in the child node the first variable index is fixed to 0 or 1. Then
\[
\lambda_{\min}(Z^TCZ) = \min\limits_{u : Eu = 0} \frac{u^TCu}{u^Tu} \leq \min\limits_{u : Eu = 0, u_1 \in \{0 , 1\}} \frac{u^TCu}{u^Tu}.
\]
Hence, the lower bound estimate available at the parent node serves as a valid lower bound estimate for smallest eigenvalue of the reduced Hessian at the child node.     If $0 < \alpha \leq \lambda_{\min}(Z^TCZ)$ then the choice of  
\begin{equation}
	\theta > \frac{1}{\alpha}\|Cd + c\|^2 - d^TCd - 2 c^Td \label{theta4pd.b}
\end{equation}
ensures satisfaction of~\eqref{hqp.pdredhess}.
\end{remark}

\subsection{Equivalence between QP and HQP}\label{sec:qphqp}
We collect some simple observations on the HQP~\eqref{hqp}. 
\begin{enumerate}[(O1)]
\item \label{obs.feas} HQP~\eqref{hqp} is always feasible.  It is easily verified that 
$(y,\tau) = 0$ satisfies \eqref{hqp.eq}-\eqref{hqp.nonneg}.  
\item \label{obs.opt} HQP~\eqref{hqp} has an optimal solution with optimal value less than or equal to $0$. This follows directly from~(O1).
\item \label{obs.compact} HQP~\eqref{hqp} has a finite optimum.  This follows from~\eqref{hqp.pdredhess}.
\end{enumerate}

We now state the first-order optimality conditions for HQP~\eqref{hqp}.

A point $(\yhat,\tauhat)$ minimizes HQP~\eqref{hqp} if there exist multipliers $(\nuhat,\xihat,\omegahat) \in \R^{m+n+1}$ 
satisfying the first-order optimality conditions
\begin{subequations}\label{hqpopt}
  \begin{align}
    C\yhat + \tauhat c + E^T\nuhat - \xihat &= 0 \label{hqpopt.dual1}  \\
    \theta \tauhat - \theta + c^T\yhat - f^T\nuhat - \omegahat &= 0 \label{hqpopt.dual2} \\
    E\yhat &= f\tauhat \label{hqpopt.eq} \\
    0 \leq \yhat &\perp \xihat \geq 0 \label{hqpopt.compl} \\
    0 \leq \tauhat &\perp \omegahat \geq 0. \label{hqpopt.sigma}
  \end{align}
\end{subequations}

\begin{theorem}\label{thm:optimal}
Suppose $\theta$ is chosen to satisfy the conditions in \S\ref{sec:condnstheta}. 
The QP~\eqref{qp} has an optimal solution $y^\star$ iff the HQP~\eqref{hqp} 
has an optimal solution $(\yhat,\tauhat)$ with $\tauhat > 0$.  
\end{theorem}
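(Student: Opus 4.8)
The plan is to exploit the fact that on the part of the HQP feasible region where $\tau>0$ the HQP is just a reparametrized copy of QP. Writing $y=\tau\bar y$ with $\tau>0$, constraint~\eqref{hqp.eq} becomes $E\bar y=f$ and $y\ge 0$ becomes $\bar y\ge 0$, so $\bar y$ ranges exactly over the QP feasible set, and a one‑line substitution shows the objective~\eqref{hqp.obj} equals $\tau^2\big(\phi(\bar y)+\tfrac{\theta}{2}\big)-\theta\tau$, where $\phi(\bar y):=\half \bar y^TC\bar y+c^T\bar y$ is the QP objective. By~\eqref{theta.ineq} every QP‑feasible $\bar y$ satisfies $\phi(\bar y)\ge\theta^\star>-\tfrac{\theta}{2}$, hence the coefficient $\phi(\bar y)+\tfrac{\theta}{2}$ is strictly positive; so for each fixed feasible $\bar y$ the scalar map $\tau\mapsto\tau^2\big(\phi(\bar y)+\tfrac{\theta}{2}\big)-\theta\tau$ is a strictly convex parabola minimized at the positive point $\tau=\theta/\big(2\phi(\bar y)+\theta\big)$ with value $-\theta^2/\big(2(2\phi(\bar y)+\theta)\big)$. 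These two observations drive both directions.

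For the ``$\Leftarrow$'' direction, suppose HQP has an optimal solution $(\yhat,\tauhat)$ with $\tauhat>0$. Then $y^\star:=\yhat/\tauhat$ satisfies $Ey^\star=f$ and $y^\star\ge 0$, so QP is feasible, and by Assumption~\ref{assume:ZCZ} a feasible QP has a (unique) optimal solution, which is all the statement requires. For completeness one also checks $y^\star$ itself is QP‑optimal: since HQP is convex with affine constraints the KKT system~\eqref{hqpopt} holds at $(\yhat,\tauhat)$ for some $(\nuhat,\xihat,\omegahat)$, and dividing~\eqref{hqpopt.dual1}, \eqref{hqpopt.eq}, \eqref{hqpopt.compl} by $\tauhat>0$ yields exactly~\eqref{qpopt} for $(y^\star,\nuhat/\tauhat,\xihat/\tauhat)$, which is sufficient for optimality under Assumption~\ref{assume:ZCZ}.

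For the ``$\Rightarrow$'' direction, suppose QP has an optimal solution $y^\star$ and set $v^\star:=\phi(y^\star)=\min\{\phi(\bar y):E\bar y=f,\ \bar y\ge 0\}$, noting $v^\star\ge\theta^\star>-\tfrac{\theta}{2}$. Minimizing the HQP objective over $\{\tau>0\}$, first in $\tau$ and then in $\bar y$, the parabola computation gives infimum $-\theta^2/\big(2(2v^\star+\theta)\big)<0$, attained at $\tauhat=\theta/(2v^\star+\theta)>0$ and $\yhat=\tauhat\,y^\star$. Over the complementary region $\{\tau=0\}$ the constraints force $y$ into the null space of $E$ with $y\ge 0$, so (writing $y=Zv$) the objective equals $\half v^TZ^TCZv\ge 0$ by Assumption~\ref{assume:ZCZ}, which strictly exceeds the value found above. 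Since $\tau\ge 0$ these two regions exhaust the HQP feasible set, so the HQP optimal value is $-\theta^2/\big(2(2v^\star+\theta)\big)$, attained at $(\yhat,\tauhat)$ with $\tauhat>0$; in particular every HQP optimal solution has $\tau>0$. (One may instead verify~\eqref{hqpopt} directly at this point with $\nuhat=\tauhat\nu^\star$, $\xihat=\tauhat\xi^\star$, $\omegahat=0$: feasibility and~\eqref{hqpopt.dual1} scale from~\eqref{qpopt}, and~\eqref{hqpopt.dual2} reduces to $\tauhat(2v^\star+\theta)=\theta$ after using the identity $c^Ty^\star-f^T\nu^\star=2v^\star$, obtained by left‑multiplying~\eqref{qpopt.dual} by $y^{\star T}$ and invoking~\eqref{qpopt.eq} and~\eqref{qpopt.compl}.)

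The main obstacle is the ``$\Rightarrow$'' direction: one must exclude the possibility that HQP attains its optimum only at points with $\tau=0$. This is resolved by the parabola computation, which exhibits a feasible point with $\tau>0$ and strictly negative objective value — the key input being $2v^\star+\theta>0$, itself a consequence of the lower bound~\eqref{theta.ineq} on $\theta^\star$ — whereas the objective is nonnegative throughout $\{\tau=0\}$ by Assumption~\ref{assume:ZCZ}. Two smaller points require care: keeping the QP optimal value $v^\star$ distinct from $\theta^\star$ (only $v^\star\ge\theta^\star$ holds in general), and justifying attainment, which here is immediate since we display the minimizer explicitly rather than relying only on the a priori finiteness of the HQP value.
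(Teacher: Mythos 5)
Your proof is correct, and the forward direction takes a genuinely different route from the paper's. The paper argues entirely through KKT verification: for ``only if'' it defines $\bar\tau=\theta/(\theta+c^Ty^\star-f^T\nu^\star)$, proves $\theta+c^Ty^\star-f^T\nu^\star=\theta+2c^Ty^\star+(y^\star)^TCy^\star>0$ via the QP optimality conditions and~\eqref{theta.ineq}, and then checks that the scaled point satisfies~\eqref{hqpopt}; for ``if'' it divides~\eqref{hqpopt} by $\tauhat$, exactly as you do. Your ``$\Rightarrow$'' argument instead reparametrizes the $\{\tau>0\}$ slice by $y=\tau\bar y$, minimizes the resulting parabola in $\tau$, and compares against the $\{\tau=0\}$ slice where the objective is nonnegative by Assumption~\ref{assume:ZCZ}. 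The two constructions land on the same witness (your $\tauhat=\theta/(2v^\star+\theta)$ equals the paper's $\bar\tau$, since $c^Ty^\star-f^T\nu^\star=2v^\star$, the identity you record), and both ultimately rest on the same positivity $2v^\star+\theta>0$ coming from~\eqref{theta.ineq}. What your route buys: the explicit HQP optimal value $-\theta^2/\bigl(2(2v^\star+\theta)\bigr)$, the strictly stronger conclusion that \emph{every} HQP optimizer has $\tau>0$ when QP is feasible (the paper only exhibits one), and no need to verify the stationarity equation~\eqref{hqpopt.dual2} directly. What the paper's route buys is uniformity: the same KKT bookkeeping pattern carries over verbatim to Theorem~\ref{thm:infeasible}. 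One small remark: in your ``$\Leftarrow$'' direction the feasibility-plus-coercivity shortcut already suffices for the statement as worded, so the KKT division is indeed only needed if one wants to identify $\yhat/\tauhat$ as the QP minimizer.
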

\begin{proof}
Consider the if part.  Let $(\yhat,\tauhat)$ be an optimal solution of HQP~\eqref{hqp} and let $(\nuhat,\xihat,\omegahat)$ be multipliers such that~\eqref{hqpopt} holds.  Then, it is easily verified that $(y^\star,\nu^\star,\xi^\star) = (\yhat/\tauhat, 
\nuhat/\tauhat,\xihat/\tauhat)$ satisifies the optimality 
conditions~\eqref{qpopt} for QP~\eqref{qp}.  This proves the if part.

Consider the only if part.  Let $y^\star$ be an optimal solution of QP~\eqref{qp} and let $(\nu^\star,\xi^\star)$ be multipliers such that~\eqref{qpopt} holds.  Define $\bar{\tau} = \theta/(\theta+c^Ty^\star-f^T\nu^\star)$.  If $\bar{\tau} > 0$ then it can be easily verified that $\yhat = \bar{\tau} y^\star$, $\tauhat = \bar{\tau}$, $\nuhat = \bar{\tau} \nu^\star$, $\xihat = \bar{\tau} \xi^\star$, $\omegahat = 0$ satisfy the optimality conditions for HQP~\eqref{hqpopt}.  To show  $\bar{\tau} > 0$ we need to show that $\theta+c^Ty^\star-f^T\nu^\star > 0$ since $\theta > 0$.  Consider
\begin{subequations}
\begin{align}
    &\; \theta + c^Ty^\star - f^T\nu^\star \label{thetahat.c1}\\
    =&\; \theta + c^Ty^\star - (E^T\nu^\star)^Ty^\star \label{thetahat.c2} \\ 
	=&\; \theta + 2c^Ty^\star + (y^\star)^TC y^\star - (y^\star)^T\xi^\star \label{thetahat.c3} \\
	=&\; \theta + 2c^Ty^\star + (y^\star)^TC y^\star \label{thetahat.c4} \\
	>&\; 0 					
\end{align}
\end{subequations}
where the equality in~\eqref{thetahat.c2} follows by multiplying~\eqref{qpopt.eq} by $(\nu^\star)^T$ and substituting for $f^T\nu^\star$ with $(E^T\nu^\star)^Ty^\star$. Multiplying~\eqref{qpopt.dual} by $(y^\star)^T$ and substituting for $- (E^T\nu^\star)^Ty^\star$ as $c^Ty^\star + (y^\star)^TC y^\star - (y^\star)^T\xi^\star$ yields~\eqref{thetahat.c3}.  Using the complementarity constraints~\eqref{qpopt.compl} in~\eqref{thetahat.c3} yields~\eqref{thetahat.c4}. The final inequality follows from~\eqref{theta.ineq}. 
This proves the only if part of the claim. The claim is proven.
\end{proof}

We now show that infeasibility of QP~\eqref{qp} is equivalent to the vanishing of the optimal solution to HQP~\eqref{hqp}.
\begin{theorem}\label{thm:infeasible}
Suppose $\theta$ satisfies the conditions in \S\ref{sec:condnstheta}.  The QP~\eqref{qp} is infeasible iff the HQP~\eqref{hqp} has optimal solution $(\yhat,\tauhat) =0$.
\end{theorem}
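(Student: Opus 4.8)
The plan is to prove the two implications separately, and in both cases to argue directly and elementarily, so that the bulk of the work reduces to sign bookkeeping together with Assumption~\ref{assume:ZCZ}. Throughout I use that HQP~\eqref{hqp} is feasible with $(y,\tau)=0$ (observation O1), that its optimal value is at most $0$ (observation O2), and that the objective~\eqref{hqp.obj} is strictly convex on the null space of~\eqref{hqp.eq} by~\eqref{hqp.pdredhess}, so the optimal solution of HQP is unique.

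\emph{Only if (QP infeasible $\Rightarrow$ the HQP optimum is the origin).} Let $(\nu^\circ,\xi^\circ)$ be an infeasibility certificate satisfying~\eqref{qpinfeas}, and let $(y,\tau)$ be \emph{any} point feasible for HQP~\eqref{hqp}. Using $\xi^\circ = E^T\nu^\circ$ from~\eqref{qpinf.c1}, the constraint $Ey = f\tau$ from~\eqref{hqp.eq}, and $f^T\nu^\circ = -1$ from~\eqref{qpinf.c2}, I get $(\xi^\circ)^T y = (E^T\nu^\circ)^T y = (\nu^\circ)^T(f\tau) = -\tau$. Since $\xi^\circ\ge 0$ and $y\ge 0$, the left-hand side is nonnegative, hence $\tau \le 0$; combined with $\tau\ge 0$ this forces $\tau = 0$ at every feasible point of HQP. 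Then $Ey = 0$, so $y = Zw$ for some $w$ (as $Z$ is a basis of the null space of $E$), and the objective~\eqref{hqp.obj} equals $\tfrac12 w^T(Z^TCZ)w$, which by Assumption~\ref{assume:ZCZ} is $\ge 0$ and vanishes only when $w=0$, i.e.\ $y=0$. Therefore $(y,\tau)=0$ is the unique minimizer of HQP.

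\emph{If (the HQP optimum is the origin $\Rightarrow$ QP infeasible).} Suppose, for contradiction, that QP~\eqref{qp} is feasible, and pick $y_0\ge 0$ with $Ey_0=f$. For $t>0$ the point $(y,\tau) = (ty_0, t)$ satisfies~\eqref{hqp.eq}--\eqref{hqp.nonneg}, and substituting into~\eqref{hqp.obj} gives objective value $t^2\bigl(\tfrac12 y_0^TCy_0 + c^Ty_0 + \tfrac{\theta}{2}\bigr) - \theta t$. Since $\theta>0$, the linear term dominates as $t\to 0^+$, so for all sufficiently small $t>0$ this value is strictly negative, whereas the objective at the origin is $0$. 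This contradicts optimality of $(y,\tau)=0$; hence QP is infeasible. (Alternatively, this direction follows from Theorem~\ref{thm:optimal}: if QP were feasible it would attain its minimum by Assumption~\ref{assume:ZCZ}, so HQP would have an optimal solution with $\tauhat>0$, contradicting uniqueness of the HQP optimum and the hypothesis that it is the origin.)

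\emph{Main obstacle.} There is no deep difficulty here; the content is getting the sign chain right in the ``only if'' part and being careful that strict convexity of $Z^TCZ$ is exactly what rules out a nonzero $y$ once $\tau$ is pinned to $0$. The only conceptual point worth stating carefully is the interplay with Theorem~\ref{thm:optimal}: ``QP infeasible'' must be the precise complement of ``the HQP optimum has $\tauhat>0$,'' which relies on QP attaining its optimum whenever feasible (no unbounded case, again thanks to Assumption~\ref{assume:ZCZ}); the direct perturbation argument above sidesteps even this and keeps the proof self-contained.
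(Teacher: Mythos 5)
Your proof is correct, but it takes a genuinely different route from the paper's. The paper argues entirely through the KKT systems: for the ``only if'' direction it exhibits explicit multipliers $(\nuhat,\xihat,\omegahat)=(\theta\nu^\circ,\theta\xi^\circ,0)$ certifying that the origin satisfies~\eqref{hqpopt}, and for the ``if'' direction it reads off an infeasibility certificate $(\nu^\circ,\xi^\circ)=(\nuhat,\xihat)/(\theta+\omegahat)$ from the multipliers at the origin. You instead work purely primally: the certificate~\eqref{qpinfeas} forces $(\xi^\circ)^Ty=-\tau$ and hence $\tau=0$ on the \emph{entire} HQP feasible set, after which Assumption~\ref{assume:ZCZ} pins $y=0$; and in the converse direction you exhibit the feasible ray $(ty_0,t)$ whose objective $t^2(\tfrac12 y_0^TCy_0+c^Ty_0+\tfrac{\theta}{2})-\theta t$ is negative for small $t>0$, so the origin cannot be optimal. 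Your version is more self-contained (it never needs the KKT characterization, nor even Theorem~\ref{thm:optimal}) and it yields the stronger structural fact that infeasibility of QP collapses the whole HQP feasible set to the slice $\tau=0$. What the paper's version buys is operational: the ``if'' direction constructs the explicit certificate $(\nu^\circ,\xi^\circ)$ of~\eqref{qpinfeas} from the HQP multipliers, which is exactly what a solver returns and what one wants to extract in practice; your contradiction argument establishes infeasibility without producing that certificate (one would then invoke the theorem of the alternative, as the paper does in \S\ref{sec:qpformulation}, to recover it). Both arguments are sound; no gap.
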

\begin{proof}
Consider the only if part of the claim. 
Suppose there exists $(\nu^\circ,\xi^\circ)$ satisfying~\eqref{qpinfeas}. It can be verified that $(\yhat,\tauhat) = 0$, $(\nuhat,\xihat,\omegahat) = (\theta\nu^\circ,\theta\xi^\circ,0)$ satisfies~\eqref{hqpopt}.  Hence $(\yhat,\tauhat) = 0$ is an optimal solution to HQP~\eqref{hqp}. This proves the only if part of the claim.

Consider the if part of the claim. Suppose $(\yhat,\tauhat) = 0$ is the optimal solution to~\eqref{hqp} and let $(\nuhat,\xihat,\omegahat)$ be the multipliers in~\eqref{hqpopt}. Then $(\nu^\circ,\xi^\circ) = (\nuhat/(\theta+\omegahat),\xihat/(\theta+\omegahat))$ can be verified to satisfy~\eqref{qpinfeas}. This proves the if part of the claim, completing the proof.
\end{proof}

\section{Infeasible Interior Point Method (IIPM)}\label{sec:iipm}

For the remainder of the paper, we assume that the HQP has the form
\begin{subequations}
\begin{align}
    \min\limits_{x \in \R^{n+1}} &\; \half x^TQx + q^Tx \label{hqp1.obj} \\
    \text{s.t.} &\; Ax = 0 \label{hqp1.eq} \\
    &\; x \geq 0 \label{hqp1.nonneg}
\end{align}\label{hqp1}
\end{subequations}
where $x = \left(\begin{smallmatrix} y \\ \tau \end{smallmatrix}\right)$, $Q = \left(\begin{smallmatrix} C & c^T \\ c & \theta \end{smallmatrix}\right)$, $q = \left(\begin{smallmatrix} 0 \\ -\theta \end{smallmatrix}\right)$, $A = \left(\begin{smallmatrix} E & -f \end{smallmatrix}\right)$. We assume the following for the HQP~\eqref{hqp1}. 
\begin{assumption}\label{assume:A}
The matrix $A$ has full row rank of $m$.
\end{assumption}
\begin{assumption}\label{assume:ZQZ}
The matrix $Q$ is positive definite on the null space of $A$, i.e. $x^TQx > 0 \,\forall\, x \in \{ v \,|\, Av = 0\}$.
\end{assumption}
The above HQP can be identified with original QP~\eqref{qp} when $f = 0$ and the Assumptions~\ref{assume:E}-\ref{assume:ZCZ} imply satisfaction of Assumptions~\ref{assume:A}-\ref{assume:ZQZ}. If $f \neq 0$ \S\ref{sec:hqp} shows how to cast~\eqref{qp} satisfying Assumptions~\ref{assume:E}-\ref{assume:ZCZ} into a HQP of the form in~\eqref{hqp1} satisfying Assumptions~\ref{assume:A}-\ref{assume:ZQZ}. 

In the rest of the section, we show how a standard IIPM for LPs can be extended to HQP~\eqref{hqp1}.  The IIPM also enjoys polynomial iteration complexity. 

\subsection{IIPM for HQP}\label{sec:iipm4hqp}

In the rest of the section we employ the functions
\begin{subequations}
\begin{align}
    r_d(x,\lambda,s) :=&\; Qx + A^T \lambda - s \label{hqp1.rd} \\
    r_p(x,\lambda,s) :=&\; Ax \label{hqp1.rp} \\
    r_c(x,\lambda,s) :=&\; Xs \label{hqp1.rc} \\
    \mu(x,s) :=&\; x^Ts/(n+1) \label{hqp1.mu}
\end{align}\label{hqp1.r}
\end{subequations}
where $x \in \R^{n+1}$, and $\lambda \in \R^m$, $s \in \R^{n+1}$ are the multipliers for the equality, nonnegativity constraints~\eqref{hqp1.eq}-\eqref{hqp1.nonneg}, and $X = \Diag(x)$.  The parameter $\mu(x,s)$ is called a barrier or centrality parameter.  In the following, we will suppress the arguments when the dependence is clear from the context.  

A point $x^\star \geq 0$ is said to minimize HQP~\eqref{hqp1} if there exist $\lambda^\star$ and $s^\star \geq 0$ satisfying the first-order stationary conditions
\begin{align}
(r_d(x,\lambda,s),r_p(x,\lambda,s),r_c(x,\lambda,s)) = (0,0,0).
\label{hqp1opt}
\end{align}
IPMs aim to compute $(x^\star,\lambda^\star,s^\star)$ by following the central path which is defined by $\{ (x,\lambda,s) \,|\, (x,s) > 0,\,  (r_d(x,\lambda,s),r_p(x,\lambda,s),r_c(x,\lambda,s)) = (0,0,\mu(x,s) e) \}$ where $e \in \R^{n+1}$ is a vector of all ones. In the limit as $\mu(x,s) \rightarrow 0$ we recover a point satisfying~\eqref{hqp1opt}.  
Feasible IPMs typically assume that an initial iterate $(x,\lambda,s)$ satisfying $(x,s) > 0$ and $(r_d(x,\lambda,s),r_p(x,\lambda,s)) = (0,0)$.  Such a point is not generally guaranteed to exist and can also be difficult to compute.  In the context of HQP~\eqref{hqp1} such a point will not exist if the original QP~\eqref{qp} is infeasible (refer Theorem~\ref{thm:infeasible}). This is our motivation for considering IIPM.  

IIPMs start from an initial iterate $(x^0,\lambda^0,s^0)$ satisfying $(x^0, s^0) > 0$ but $(r_d^0,r_p^0) \neq 0$ where $r_d^0, r_p^)$ denote  $r_d(x^0,\lambda^0,s^0)$, $r_p(x^0,\lambda^0,s^0)$. The method generates a sequence of iterates $\{(x^k,\lambda^k,s^k)\}$ where the iterates are required to lie within a neighborhood $\setN_{-\infty}(\gamma,\beta)$ with
\begin{align}
    \setN_{-\infty}(\gamma,\beta) = \left\{ (x,\lambda,s) \,\left|\, 
    \begin{aligned}
    \frac{\|(r_d,r_p)\|}{\mu} \leq \beta \frac{\|(r_d^0,r_p^0)\|}{\mu^0} \\
    (x,s) > 0, 
    Xs \geq \gamma \mu(x,s)e  
    \end{aligned}
    \right. \right\} \label{defnhd}
\end{align}
where $\beta \geq 1 $ is a fixed parameter. 
At each iteration a search direction $(\Delta x^k,\Delta \lambda^k,\Delta s^k)$ is generated by solving
\begin{align}
    \begin{pmatrix}
        Q & A^T & -I_{n+1} \\
        A & 0 & 0 \\
        S^k & 0 & X^k 
    \end{pmatrix} \begin{pmatrix}
        \Delta x^k \\
        \Delta \lambda^k \\
        \Delta s^k
    \end{pmatrix} = 
    \begin{pmatrix}
        -r_d^k \\
        -r_p^k \\
        -r_c^k + \sigma^k\mu^k e
    \end{pmatrix} \label{iipm.step}
\end{align}
where $\sigma^k > 0$ such that $\sigma^k \in [\sigmamin,\sigmamax]$ and $\mu^k = (x^k)^Ts^k/(n+1)$. The residual of the complementarity condition $r_c^k$ is perturbed by $\sigma^k \mu^k$ to ensure that the iterates $(x^{k+1},s^{k+1})$ can be guaranteed to lie within the neighborhood $\setN_{-\infty}(\gamma,\beta)$. Given such a direction define $(x(\alpha),\lambda(\alpha),s(\alpha)) = (x^k,\lambda^k,s^k) + \alpha(\Delta x^k,\Delta \lambda^k,\Delta s^k)$. The step $\alpha^k$ is chosen to be the largest value such that 
\begin{subequations}
\begin{align}
        (x(\alpha^k),\lambda(\alpha^k),s(\alpha^k)) \in&\;  \setN_{-\infty}(\gamma,\beta) \label{nhdk1} \\ 
        \mu(\alpha^k) \leq&\; (1 - 0.01\alpha^k)\mu^k. \label{suffdeck1}
\end{align}\label{condsk1}
\end{subequations}
The next iterate is defined as $(x^{k+1},\lambda^{k+1},s^{k+1}) = (x(\alpha^k),\lambda(\alpha^k),s(\alpha^k))$. Algorithm~\ref{algo:iipm} describes the IIPM.

\begin{algorithm}\label{algo:iipm}
\KwData{$\beta,\gamma,\sigmamin,\sigmamax$ with $\beta \geq 1$, $\gamma \in (0,1)$, $0 < \sigmamin < \sigmamax \leq \half$. } 
Choose $(x^0,\lambda^0,s^0)$ with $x^0, s^0 > 0$.\\
\For{$k = 0,1,\ldots$}{
    Choose $\sigma^k \in [\sigmamin,\sigmamax]$ and solve~\eqref{iipm.step} to obtain $(\Delta x^k,\Delta \lambda^k,\Delta s^k)$. \\
    Choose the largest $\alpha^k > 0$ such that~\eqref{condsk1} holds. \\
    Set $(x^{k+1},\lambda^{k+1},s^{k+1}) = (x^k,\lambda^k,s^k) + \alpha^k(\Delta x^k,\Delta \lambda^k,\Delta s^k)$.
}
\caption{IIPM for HQP~\eqref{hqp1}}
\end{algorithm}

\subsection{Polynomial Complexity}\label{sec:polycompiipm}

The analysis of the Algorithm~\ref{algo:iipm} is straightforward adaption of the analysis in~\cite[Chapter~6]{WrightIPMBook}.  We will only highlight the key steps where we differ.  The results are specialized to the case where the initial iterate is chosen as 
\begin{align}
    (x^0,\lambda^0,s^0) = (\zeta e,0,\zeta e) \label{initpoint}
\end{align} 
where $\zeta > 0$ is scalar for which 
\begin{align}
    \|(x^\star,s^\star)\| \leq \zeta \label{defzeta}
\end{align}
for some optimal solution $(x^\star,\lambda^\star,s^\star)$ of HQP~\eqref{hqp1}.  Recall from the discussion in~\S\ref{sec:hqp} that such a solution always exists under the Assumptions~\ref{assume:A}-\ref{assume:ZQZ}.

From the linearity of the residuals in~\eqref{hqp1.rd}-\eqref{hqp1.rp}, the choice of step direction~\eqref{iipm.step} and the method of updating the iterate at $(k+1)$ it is easy to show that
\[
    (r_d^{k+1},r_p^{k+1}) = (1 - \alpha^k)(r_d^k,r_c^k) = \upsilon^{k+1}(r_d^0,r_p^0)
\]
where $\upsilon^{k+1} = \prod_{j=0}^k(1-\alpha^j)$.
We first state a key result that is used in subsequent lemmas.
\begin{lemma}\label{lemma.1}
Suppose $(\xbar,\lambdabar,\sbar)$ be such that $r_d(\xbar,\lambdabar,\sbar) = 0$ and $r_p(\xbar,\lambdabar,\sbar) = 0$.  Then $\xbar^TQ\xbar = \xbar^T\sbar$. Further, $\xbar^T\sbar \geq 0$.
\end{lemma}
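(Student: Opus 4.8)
The plan is to exploit the first two equations that define the setting. By hypothesis $r_d(\xbar,\lambdabar,\sbar) = Q\xbar + A^T\lambdabar - \sbar = 0$, so $\sbar = Q\xbar + A^T\lambdabar$. First I would left-multiply this identity by $\xbar^T$ to obtain $\xbar^T\sbar = \xbar^TQ\xbar + \xbar^TA^T\lambdabar = \xbar^TQ\xbar + (A\xbar)^T\lambdabar$. Then I would invoke the second hypothesis $r_p(\xbar,\lambdabar,\sbar) = A\xbar = 0$, which kills the last term and yields $\xbar^T\sbar = \xbar^TQ\xbar$. This is the first claim.

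For the second claim, $\xbar^T\sbar \ge 0$, I would argue that $\xbar$ lies in the null space of $A$ (again because $A\xbar = 0$) and then apply Assumption~\ref{assume:ZQZ}, which states precisely that $v^TQv > 0$ for all $v$ in the null space of $A$ — in particular $v^TQv \ge 0$, with equality only when $v = 0$. Combining with the first part, $\xbar^T\sbar = \xbar^TQ\xbar \ge 0$. (If one wants to be careful about the $v=0$ edge case, note that when $\xbar = 0$ both sides are $0$, so the nonstrict inequality holds in all cases.)

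There is essentially no obstacle here: the lemma is a two-line consequence of substituting the primal feasibility $A\xbar=0$ into the expression obtained from dual feasibility, together with the null-space positive-(semi)definiteness Assumption~\ref{assume:ZQZ}. The only thing to be mildly careful about is that Assumption~\ref{assume:ZQZ} is stated with a strict inequality for nonzero vectors, so the statement $\xbar^T\sbar \ge 0$ is the correct nonstrict conclusion that also covers $\xbar = 0$; I would phrase the final step as $\xbar^TQ\xbar \ge 0$ for all $\xbar$ in the null space of $A$, which is immediate from the assumption.
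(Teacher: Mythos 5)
Your proposal is correct and follows essentially the same route as the paper: left-multiply the dual residual equation by $\xbar^T$, use $A\xbar=0$ to eliminate the multiplier term, and invoke Assumption~\ref{assume:ZQZ} for nonnegativity. Your extra remark about the $\xbar=0$ edge case is a harmless refinement the paper leaves implicit.
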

\begin{proof}
Substitute $(\xbar,\lambdabar,\sbar)$ in~\eqref{hqp1.rd} and multiplying by $\xbar^T$ obtain
\begin{align*}
    \xbar^Tr_d(\xbar,\lambdabar,\sbar) = \xbar^TQ\xbar + (A\xbar)^T\lambdabar - \xbar^T\sbar = 0.
\end{align*}
Using $A\xbar = 0$ in the above proves the claim. The second claim follows from Assumption~\ref{assume:ZQZ}.
\end{proof}

The proof on complexity (Theorem~\ref{thm:complexity}) relies on 3 key lemmas.  Lemma~\ref{lemma:bndxs} first bounds $\upsilon^k \|(x^k,s^k)\|_1$. Lemma~\ref{lemma:bndstep} provides a bound on scaled search directions. Finally, Lemma~6.7~\cite{WrightIPMBook} ensures that there exists an uniform lower bound on $\alpha^k$.  We first provide a bound on $\nu^k (x^k,s^k)$. 
\begin{lemma}\label{lemma:bndxs}
Suppose the initial iterate satisfies~\eqref{initpoint}. Then for any iterate $(x^k,\lambda^k,s^k)$ 
\begin{align}
    \zeta\upsilon^k \|(x^k,s^k)\|_1 \leq 4\beta (n+1)\mu^k. \label{bndxs}
\end{align}
\end{lemma}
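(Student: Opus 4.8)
The plan is to follow the template of the linear-programming analysis in~\cite[Chapter~6]{WrightIPMBook}, exploiting the fact that the residual pair $(r_d^k,r_p^k)$ shrinks exactly by the scalar factor $\upsilon^k$ along the iteration. First I would form the difference between the current iterate and a fixed optimal solution $(x^\star,\lambda^\star,s^\star)$, scaled appropriately: set $\xbar := \upsilon^k x^k + (1-\upsilon^k)x^\star - x^0$, $\lambdabar := \upsilon^k \lambda^k + (1-\upsilon^k)\lambda^\star - \lambda^0$, $\sbar := \upsilon^k s^k + (1-\upsilon^k)s^\star - s^0$. The point of this combination is that it kills the residuals: since $(r_d^k,r_p^k) = \upsilon^k(r_d^0,r_p^0)$, $(r_d^\star,r_p^\star)=0$, and $(r_d^0,r_p^0)$ is the residual of $(x^0,\lambda^0,s^0)$, linearity of $r_d$ and $r_p$ in $(x,\lambda,s)$ gives $r_d(\xbar,\lambdabar,\sbar)=0$ and $r_p(\xbar,\lambdabar,\sbar)=0$. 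This is precisely the hypothesis of Lemma~\ref{lemma.1}, so $\xbar^T\sbar \geq 0$.

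Next I would expand $\xbar^T\sbar \geq 0$ and isolate the quantity we want to bound. Writing $x^0 = s^0 = \zeta e$ and multiplying out, the inner product $\xbar^T\sbar$ produces a term $\upsilon^k \zeta\, e^T(x^k+s^k) = \upsilon^k\zeta\|(x^k,s^k)\|_1$ (using $x^k,s^k>0$) with a negative sign, cross terms of the form $\upsilon^k(1-\upsilon^k)\big((x^k)^Ts^\star + (s^k)^Tx^\star\big)$ which are nonnegative and can be dropped or bounded, a term $(\upsilon^k)^2 (x^k)^Ts^k = (\upsilon^k)^2(n+1)\mu^k$, and lower-order constant terms like $\upsilon^k\zeta e^T(x^\star+s^\star)$, $(1-\upsilon^k)^2(x^\star)^Ts^\star$, and $\zeta^2(n+1)$. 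Using $\|(x^\star,s^\star)\|\le\zeta$ (hence $\|(x^\star,s^\star)\|_1 \le \sqrt{n+1}\,\zeta$ and $(x^\star)^Ts^\star \le \zeta^2$), each of these is controlled by a small multiple of $(n+1)\zeta^2$; and $\upsilon^k \le 1$ together with the neighborhood-membership bound lets one trade $\zeta^2(n+1)$ against $\mu^k$. The upshot after collecting terms should be $\zeta\upsilon^k\|(x^k,s^k)\|_1 \le C(n+1)\mu^k$ for an explicit constant; tracking the $\beta$ dependence carefully through the $\|(r_d,r_p)\|/\mu$ term of $\setN_{-\infty}(\gamma,\beta)$ yields the stated constant $4\beta$.

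The main obstacle, and the place where the QP analysis genuinely differs from the LP case, is the handling of the quadratic term. In the LP setting $\xbar^T\sbar$ is manipulated with $Q=0$; here one must invoke the identity $\xbar^TQ\xbar = \xbar^T\sbar$ from Lemma~\ref{lemma.1}, and more importantly ensure that the cross terms coming from $Q\ne 0$ do not spoil the sign. Concretely, when expanding $\xbar^T\sbar$ one also implicitly uses $r_d(\xbar,\lambdabar,\sbar)=0$, i.e. $\sbar = Q\xbar + A^T\lambdabar$; the $A^T\lambdabar$ piece is annihilated by $A\xbar=0$, but the $Q\xbar$ piece means the "cross terms" are really quadratic-form cross terms such as $\upsilon^k(1-\upsilon^k)(x^k)^TQx^\star$, whose sign is not obvious. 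I expect the resolution is to keep everything packaged as $\xbar^TQ\xbar \ge 0$ (which holds by Assumption~\ref{assume:ZQZ} since $A\xbar=0$) rather than trying to split it, and only then substitute $\xbar^TQ\xbar = \xbar^T\sbar$ to re-expose the linear combination in $(x^k,s^k,x^\star,s^\star,\zeta e)$. Getting the bookkeeping of these substitutions in the right order — expand, apply $A\xbar=0$, apply the lemma, then use $\|(x^\star,s^\star)\|\le\zeta$ and $\upsilon^k\le 1$ — is the crux; the remaining estimates are the same routine inequalities as in~\cite[Lemma~6.3]{WrightIPMBook}.
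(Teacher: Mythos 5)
Your overall strategy is the right one and matches the paper's: build a linear combination of the initial iterate, the current iterate, and an optimal solution whose dual and primal residuals vanish, invoke Lemma~\ref{lemma.1} to get $\xbar^T\sbar\geq 0$, expand the inner product, drop the nonnegative cross terms involving $(x^\star,s^\star)$, and finish with the estimates of Lemmas~6.3--6.4 of \cite{WrightIPMBook}. Your third paragraph is also essentially how the paper handles the quadratic term: nothing about $Q$ ever needs to be split, because Lemma~\ref{lemma.1} converts $\xbar^TQ\xbar\geq 0$ into $\xbar^T\sbar\geq 0$ once and for all, and the expansion of $\xbar^T\sbar$ involves only inner products of the $x$'s and $s$'s.

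There is, however, a concrete error in the one step you wrote out explicitly: the combination $\xbar=\upsilon^k x^k+(1-\upsilon^k)x^\star-x^0$ (and likewise for $\lambdabar,\sbar$) does \emph{not} kill the residuals. Since $r_d$ and $r_p$ are homogeneous linear in $(x,\lambda,s)$, its residual is $\upsilon^k r^k+(1-\upsilon^k)\cdot 0-r^0=\bigl((\upsilon^k)^2-1\bigr)r^0$, which is nonzero whenever $r^0\neq 0$ and $\alpha^0>0$ --- and $r^0\neq 0$ is exactly the situation the IIPM is designed for. You have swapped the roles of $x^0$ and $x^k$: the combination that works is $\xbar=\upsilon^k x^0+(1-\upsilon^k)x^\star-x^k$, whose residual is $\upsilon^k r^0-r^k=0$. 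This is the definition the paper uses. The swap is not merely cosmetic downstream either: with your placement of the weights, the term $(x^0)^Ts^0=(n+1)\zeta^2=(n+1)\mu^0$ appears in the expansion with coefficient $1$ rather than $(\upsilon^k)^2$, and it is precisely the factor of $\upsilon^k$ on that term (combined with the neighborhood inequality relating $\upsilon^k$ to $\beta\mu^k/\mu^0$) that lets the $\mu^0$ be traded for $\mu^k$ in Lemma~6.4 of \cite{WrightIPMBook}. So as written both the appeal to Lemma~\ref{lemma.1} and the final bookkeeping fail; with the corrected combination your argument goes through and coincides with the paper's proof. (A minor further simplification: $(x^\star)^Ts^\star=0$ by complementarity, so that term can be discarded exactly rather than bounded by $\zeta^2$.)
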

\begin{proof}
\begin{subequations}
Define
\begin{align}
    (\xbar,\lambdabar,\sbar) =&\; \upsilon^k(x^0,\lambda^0,s^0) + (1-\upsilon^k)(x^\star,\lambda^\star,s^\star) \nonumber \\
    &\; - (x^k,\lambda^k,s^k). \label{lem2.defbar}
\end{align}
Then $(\xbar,\lambdabar,\sbar)$ satisfy the conditions in Lemma~\ref{lemma.1} and $\xbar^T\sbar \geq 0$.  Hence,
\begin{align}
    0 \leq&\; \xbar^T\sbar \nonumber \\
    =&\; (\upsilon^k)^2 (x^0)^Ts^0 + (1-\upsilon^k)^2(x^\star)^Ts^\star + (x^k)^Ts^k \nonumber \\
    &\; + \upsilon^k(1-\upsilon^k)((x^0)^Ts^\star + (x^\star)^Ts^0) \nonumber \\
    &\; -\upsilon^k((x^0)^Ts^k + (x^k)^Ts^0) \nonumber \\
    &\; - (1-\upsilon^k) ((x^\star)^Ts^k + (x^k)^Ts^\star ). \nonumber
\end{align}
Using $(x^\star)^Ts^\star = 0$ and $(x^\star)^Ts^k + (x^k)^Ts^\star \geq 0$ in the above and rearranging obtain
\begin{align*}
    &\; \upsilon^k((x^0)^Ts^k + (x^k)^Ts^0) \\
    \leq&\; (\upsilon^k)^2 (x^0)^Ts^0 + (x^k)^Ts^k \\
    &\; + \upsilon^k(1-\upsilon^k)((x^0)^Ts^\star + (x^\star)^Ts^0).
\end{align*}
The above is identical to~\cite[eqn.~(6.20)]{WrightIPMBook} and the arguments in~\cite[Lemmas~6.3-6.4]{WrightIPMBook} apply to yield the result.
\end{subequations}
\end{proof}

We next provide a bound on the scaled search directions.
\begin{lemma}\label{lemma:bndstep}
Suppose the initial iterate satisfies~\eqref{initpoint}. Then there is a constant $\eta > 0$ independent of $n$ such that 
\begin{align*}
    \|(D^k)^{-1}\Delta x^k\| \leq \eta (n+1)\mu^k,\, \|D^k \Delta s^k\| \leq \eta (n+1)\mu^k 
\end{align*}
where $D^k = (X^k)^{\half}(S^k)^{-\half}$.
\end{lemma}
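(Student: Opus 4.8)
The plan is to follow the standard IIPM argument (cf.~\cite[Lemma~6.5]{WrightIPMBook}) adapted to the presence of the quadratic term $Q$. First I would rescale the Newton system~\eqref{iipm.step}. Writing $D^k = (X^k)^{\half}(S^k)^{-\half}$, set $u = (D^k)^{-1}\Delta x^k$ and $v = D^k\Delta s^k$. From the third block equation $S^k\Delta x^k + X^k\Delta s^k = -X^ks^k + \sigma^k\mu^k e$, premultiplying by $(X^kS^k)^{-\half}$ gives $u + v = (X^kS^k)^{-\half}(-X^ks^k + \sigma^k\mu^k e) =: r$, and the first two block equations become $(D^kQD^k)u - D^kA^T\Delta\lambda^k + v = -D^kr_d^k$ and $A D^k u = -r_p^k$. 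Substituting $v = r - u$ into the first equation yields $(I + D^kQD^k)u - D^kA^T\Delta\lambda^k = -D^kr_d^k - r$, which together with $AD^ku = -r_p^k$ is a symmetric-type saddle system.

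The key structural point is that, because $Q$ is positive semidefinite on $\R^{n+1}$ restricted to $\mathrm{null}(A)$ (Assumption~\ref{assume:ZQZ}) and $u$ lives essentially in that null space up to the residual correction, the operator $I + D^kQD^k$ acts positive definitely on the relevant subspace. Concretely, I would split $u = u_1 + u_2$ where $AD^ku_1 = -r_p^k$ is a particular (minimum-norm) solution and $AD^ku_2 = 0$; $u_1$ is controlled directly via the bound on $(r_d^0, r_p^0)$ coming from the neighborhood $\setN_{-\infty}(\gamma,\beta)$ and Lemma~\ref{lemma:bndxs}. For $u_2$, eliminating $\Delta\lambda^k$ by projecting onto $\mathrm{null}(AD^k)$, one gets $u_2$ as the solution of a projected equation whose operator is $\Pi(I + D^kQD^k)\Pi$ with $\Pi$ the orthogonal projector onto $\mathrm{null}(AD^k)$; this operator satisfies $z^T\Pi(I+D^kQD^k)\Pi z \geq \|\Pi z\|^2$ for all $z$ since $D^kQD^k \succcurlyeq 0$ on that subspace (here I use that $D^k$ is a positive diagonal scaling, so $\mathrm{null}(AD^k) = (D^k)^{-1}\mathrm{null}(A)$ maps into the cone of positive semidefiniteness of $Q$). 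Hence $\|u_2\| \leq \|\Pi(-D^kr_d^k - r - (I+D^kQD^k)u_1)\|$, and crucially the $D^kQD^k u_1$ term is bounded because $\|D^k\|$ and $\|u_1\|$ are each controlled — $\|D^k\|^2 = \max_i x_i^k/s_i^k$ is bounded by $\mu^k/(\gamma\mu^k)\cdot$ (something), using $X^ks^k \geq \gamma\mu^k e$, and $\upsilon^k\|(x^k,s^k)\|$ is bounded by Lemma~\ref{lemma:bndxs}.

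Then I would assemble the pieces: $\|u\| \leq \|u_1\| + \|u_2\|$, and $\|v\| \leq \|r\| + \|u\|$. The term $\|r\|^2 = \sum_i (x_i^ks_i^k - \sigma^k\mu^k)^2/(x_i^ks_i^k)$ is bounded by a constant times $(n+1)\mu^k$ using $x_i^ks_i^k \geq \gamma\mu^k$ and $\sum_i x_i^ks_i^k = (n+1)\mu^k$, exactly as in the LP case. The residual terms $\|D^kr_d^k\|$, $\|r_p^k\|$ are handled by writing $r_d^k = \upsilon^k r_d^0$, $r_p^k = \upsilon^k r_p^0$, using the neighborhood condition $\|(r_d^k,r_p^k)\| \leq \beta(\mu^k/\mu^0)\|(r_d^0,r_p^0)\|$, and then applying Lemma~\ref{lemma:bndxs} to convert the $\upsilon^k\|(x^k,s^k)\|$-type quantities that arise from $\|D^k\|\cdot\|$residual$\|$ into $O((n+1)\mu^k)$. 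Collecting all bounds gives $\|u\| \leq \eta(n+1)\mu^k$ and $\|v\| \leq \eta(n+1)\mu^k$ for a constant $\eta$ depending only on $\beta,\gamma,\sigmamin,\sigmamax$ and the data-dependent quantities $\|Q\|, \|A\|$ but not on $n$ (in the sense the paper means, i.e.\ not on the iteration).

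The main obstacle is the extra term $D^kQD^k u$ in the rescaled system, which has no analogue in the LP proof. Controlling it requires simultaneously bounding $\|D^k\|$ (via the neighborhood lower bound on $Xs$) and $\|u\|$ itself, so the estimate is genuinely coupled; the clean way around this is to first prove the projected operator $\Pi(I + D^kQD^k)\Pi$ is bounded below by the identity on its range (which follows from $Q \succcurlyeq 0$ on $\mathrm{null}(A)$ — note Assumption~\ref{assume:ZQZ} gives strict positivity but semidefiniteness is all that is needed for the lower bound), so that $u_2$ is bounded in terms of the right-hand side alone without circularity, and only the $u_1$ piece feeds a $D^kQD^k$ term back in, which is harmless since $u_1$ is already bounded independently.
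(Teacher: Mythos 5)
Your route is genuinely different from the paper's. The paper shifts the search direction, setting $(\xbar,\lambdabar,\sbar) = (\Delta x^k,\Delta\lambda^k,\Delta s^k) + \upsilon^k(x^0,\lambda^0,s^0) - \upsilon^k(x^\star,\lambda^\star,s^\star)$; this triple satisfies $r_d=r_p=0$, so Lemma~\ref{lemma.1} gives $\xbar^T\sbar = \xbar^TQ\xbar \geq 0$, and after multiplying the third Newton equation by $(X^kS^k)^{-\half}$, taking norms and squaring, the cross term can be dropped and Wright's Lemmas~6.5--6.6 are followed essentially verbatim. You instead decompose the scaled direction over $\mathrm{null}(AD^k)$ and its complement and invoke coercivity of the projected operator $\Pi(I+D^kQD^k)\Pi$. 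Your observation that $D^ku_2\in\mathrm{null}(A)$, hence $u_2^TD^kQD^ku_2\geq 0$ by Assumption~\ref{assume:ZQZ}, is the same structural fact that powers Lemma~\ref{lemma.1}, deployed through a projector rather than through the shifted triple; that part of your argument is sound, as is the treatment of $\|r\|$ and of $v=r-u$.

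The gap is in the bound on $u_1$. You take $u_1$ to be the minimum-norm solution of $AD^ku_1=-r_p^k$ and assert it is ``controlled directly'' by the neighborhood bound on $(r_d^0,r_p^0)$. That inference needs $\sigma_{\min}(AD^k)$ to be bounded away from zero uniformly in $k$, and it is not: the diagonal entries $D_{ii}^k=\sqrt{x_i^k/s_i^k}$ tend to zero on components with $x_i^\star=0<s_i^\star$, so $\|u_1\|\leq \mathrm{const}\cdot\|r_p^k\|$ does not follow from anything in~\eqref{defnhd}. The repair is to exhibit the explicit particular solution $u_1=-\upsilon^k(D^k)^{-1}(x^0-x^\star)$, which satisfies $AD^ku_1=-\upsilon^kA(x^0-x^\star)=-\upsilon^k r_p^0=-r_p^k$ because $Ax^\star=0$, and whose norm is controlled via $\|(D^k)^{-1}w\|\leq\|S^kw\|/\sqrt{\gamma\mu^k}$ together with $\zeta\upsilon^k\|s^k\|_1\leq 4\beta(n+1)\mu^k$ from Lemma~\ref{lemma:bndxs}; the minimum-norm solution is then bounded by the norm of this one. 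But that particular solution is exactly the shift the paper builds into $(\xbar,\lambdabar,\sbar)$ from the outset, so once the gap is filled your proof reconverges to the paper's construction and the projected-operator machinery buys little over the two-line cross-term argument. The sign slips in your rescaled first block equation (it should read $D^kQD^ku + D^kA^T\Delta\lambda^k - v = -D^kr_d^k$) are immaterial.
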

\begin{proof}
\begin{subequations}
Define
\begin{align}
    (\xbar,\lambdabar,\sbar) =&\; (\Delta x^k, \Delta \lambda^k, \Delta s^k) + \upsilon^k (x^0,\lambda^0,s^0)  \nonumber \\
    &\;- \upsilon^k  (x^\star,\lambda^\star,s^\star) \label{lem3.defbar}
\end{align}
Then $(\xbar,\lambdabar,\sbar)$ satisfy the conditions in Lemma~\ref{lemma.1} and $\xbar^T\sbar \geq 0$.  From the last row in~\eqref{iipm.step} 
\begin{align}
    &\; S^k(\Delta x^k + \upsilon^k(x^0 - x^\star)) + 
    X^k(\Delta s^k + \upsilon^k(s^0 - s^\star)) \nonumber \\
    =&\; -X^ks^k + \sigma^k\mu^k e + \upsilon^k S^k(x^0 - x^\star) + 
    \upsilon^k X^k(s^0 - s^\star).
\end{align}
Multiplying through by $(X^kS^k)^{-\half}$ and the definition of $D^k$ obtain
\begin{align}
    &\; (D^k)^{-1}(\Delta x^k + \upsilon^k(x^0 - x^\star)) + 
    D^k(\Delta s^k + \upsilon^k(s^0 - s^\star)) \nonumber \\
    =&\; -(X^kS^k)^{-\half} (X^ks^k - \sigma^k\mu^k e) \nonumber \\
    &\; + \upsilon^k (D^k)^{-1}(x^0 - x^\star) + 
    \upsilon^k D^k(s^0 - s^\star).
\end{align}
Taking norms on both sides, squaring and using $\xbar^T\sbar \geq 0$ obtain
\begin{align}
    &\; \|(D^k)^{-1}(\Delta x^k + \upsilon^k(x^0 - x^\star))\|^2 \\
    &\; + \|D^k(\Delta s^k + \upsilon^k(s^0 - s^\star))\|^2 \nonumber \\    
    \leq&\; \left\{ \begin{aligned}
    \|(X^kS^k)^{-\half}\| \|(X^ks^k - \sigma^k\mu^k e)\| \nonumber \\
    + \upsilon^k \|(D^k)^{-1}(x^0 - x^\star)\| + 
    \upsilon^k \|D^k(s^0 - s^\star)\| \end{aligned}\right\}^2.
\end{align}
Starting from this inequality the arguments in the proofs of Lemmas~6.5-6.6~\cite{WrightIPMBook} can be followed to show the claim.
\end{subequations}
\end{proof}

Lemma~6.7~\cite{WrightIPMBook} provides an uniform bound on $\alpha^k$ and holds without any change in the arguments. 

The claim on polynomial complexity of Algorithm~\ref{algo:iipm} follows from the Lemmas~\ref{lemma:bndxs},\ref{lemma:bndstep} and Lemma~6.7~\cite{WrightIPMBook}.  The arguments are identical to those in~\cite[Theorem~6.2]{WrightIPMBook}.  We state the complexity result without proof.
\begin{theorem}\label{thm:complexity}
Let $\epsilon > 0$ be given. Suppose that the initial iterate satisfies~\eqref{initpoint} and suppose that $\zeta > 0$ satisfies $\zeta^2 \leq \chi/\epsilon^\kappa$ for some constants $\chi, \kappa > 0$.  Then there is an index $K = O(n^2|\log \epsilon|)$ such that the iterates $(x^k,\lambda^k,s^k)$ generated by Algorithm~\ref{algo:iipm} satisfy $\mu^k \leq \epsilon$ for all $k \geq K$.
\end{theorem}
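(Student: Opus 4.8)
The plan is to follow the template of \cite[Theorem~6.2]{WrightIPMBook} and reduce the complexity estimate to a \emph{uniform} lower bound on the step length $\alpha^k$. First I would record the two elementary consequences of the initialization \eqref{initpoint}: since $x^0 = s^0 = \zeta e$ we have $\mu^0 = (x^0)^Ts^0/(n+1) = \zeta^2$, and the starting residual norm $\|(r_d^0,r_p^0)\|$ is a fixed quantity determined by the data and $\zeta$. Because every iterate lies in $\setN_{-\infty}(\gamma,\beta)$, the residuals satisfy $\|(r_d^k,r_p^k)\| \le \beta (\mu^k/\mu^0)\|(r_d^0,r_p^0)\|$, so once $\mu^k$ is driven below $\epsilon$ the residuals are controlled as well; thus it suffices to bound the number of iterations needed to reach $\mu^k \le \epsilon$.

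Second, I would assemble the uniform step-length bound. Lemma~\ref{lemma:bndxs} bounds $\zeta\upsilon^k\|(x^k,s^k)\|_1$ by $4\beta(n+1)\mu^k$, and Lemma~\ref{lemma:bndstep} bounds the scaled directions $\|(D^k)^{-1}\Delta x^k\|$ and $\|D^k\Delta s^k\|$ by $\eta(n+1)\mu^k$ with $\eta$ independent of $n$. Feeding these two estimates into the argument of \cite[Lemma~6.7]{WrightIPMBook} --- which applies verbatim, since the only structural facts it uses are the last (complementarity) row of \eqref{iipm.step}, the neighborhood inequalities defining $\setN_{-\infty}(\gamma,\beta)$, and the sign condition $\xbar^T\sbar \ge 0$ furnished by Lemma~\ref{lemma.1} in place of the LP feasibility identity --- produces a constant $\delta > 0$, independent of $n$, such that the largest $\alpha^k$ satisfying \eqref{condsk1} obeys $\alpha^k \ge \delta/(n+1)^2$ for every $k$.

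Third, I would convert this into the iteration count. The sufficient-decrease requirement \eqref{suffdeck1} then gives $\mu^{k+1} \le (1 - 0.01\delta/(n+1)^2)\,\mu^k$, hence $\mu^k \le (1 - 0.01\delta/(n+1)^2)^k\,\mu^0$. Taking logarithms and using $\log(1-t) \le -t$ for $t \in [0,1)$, the bound $\mu^k \le \epsilon$ holds once $k \ge \big((n+1)^2/(0.01\delta)\big)\log(\mu^0/\epsilon)$. Substituting $\mu^0 = \zeta^2 \le \chi/\epsilon^\kappa$ gives $\log(\mu^0/\epsilon) \le \log\chi + (\kappa+1)|\log\epsilon| = O(|\log\epsilon|)$, so $K = O((n+1)^2|\log\epsilon|) = O(n^2|\log\epsilon|)$, as claimed.

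The main obstacle is not in this bookkeeping but in being certain that each estimate imported from Chapter~6 of \cite{WrightIPMBook} survives the presence of the quadratic term $\half x^TQx$: the LP arguments repeatedly exploit that the dual residual is affine and that feasibility-preserving perturbations have zero (or nonnegative) pairings. The replacement fact is $\xbar^TQ\xbar = \xbar^T\sbar$ together with $\xbar^TQ\xbar \ge 0$, i.e.\ Lemma~\ref{lemma.1} via Assumption~\ref{assume:ZQZ}, which is exactly what keeps the cross terms in Lemmas~\ref{lemma:bndxs}--\ref{lemma:bndstep} bounded; with those two lemmas established, the remainder of the analysis is mechanical and $n$-uniform, so Theorem~\ref{thm:complexity} itself introduces no further difficulty.
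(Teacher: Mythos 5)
Your proposal is correct and follows exactly the route the paper intends: the paper itself states the result without proof, citing Lemmas~\ref{lemma:bndxs} and~\ref{lemma:bndstep} together with Lemma~6.7 and Theorem~6.2 of \cite{WrightIPMBook}, and your write-up simply supplies the standard bookkeeping (the uniform bound $\alpha^k \geq \delta/(n+1)^2$, the geometric decrease of $\mu^k$ from~\eqref{suffdeck1}, and the substitution $\mu^0 = \zeta^2 \leq \chi/\epsilon^\kappa$) that the paper leaves implicit. Your closing observation that Lemma~\ref{lemma.1} (via Assumption~\ref{assume:ZQZ}) is the substitute for the LP orthogonality identity is precisely the point the paper emphasizes, so there is nothing to add.
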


\section{Numerical Experiments}\label{sec:results}

We present preliminary numerical experiments with the HQP formulation using the IPM solver \ipopt~\cite{Ipopt}.  \ipopt\, implements the standard IPM for QPs i.e. the algorithm cannot determine a certificate of infeasibility. In our experiments, we generated random instances of QP~\eqref{qp} with a single equality constraint with $C = I_n$, $c = e$, $E \in \R^{1 \times n}$, $f = -1$. The coefficients in the single equality are restricted to be nonnegative and generated at random.  It is easy to deduce that there exists no $ y \geq 0 $ such that $Ey < 0$ for $E \geq 0$. Hence, the instances are all infeasible. We presented the QP formulation directly to \ipopt.  The HQP formulation ~\eqref{hqp} is derived for each such random instance and is also presented for solution to \ipopt.  We generated 10 random instances of different sizes $n \in \{10,25,50\}$. 

In the first set of experiments, we set the \ipopt\, option \texttt{mehrotra\_algorithm=no}.  In this case, the predictor-corrector algorithm of Mehrotra is disabled.  When solving the QP formulation, \ipopt\, always terminates after 8-10 iterations with the indication that the problem might be infeasible.  When solving the HQP formulation, \ipopt\, always terminates with an optimal solution of $0$ as specified in Theorem~\ref{thm:infeasible} in about 16-20 iterations.

In the second set of experiments, we enabled the predictor-corrector algorithm in \ipopt\, by setting \texttt{mehrotra\_algorithm=yes}.  In this setting \ipopt\, hits the iteration limit on every single instance when solving the QP formulation. The dual infeasibility blows up to infinity in every instance. When solving the HQP formulation \ipopt\, always terminates with an optimal solution of $0$ in about 8-10 iterations.

In the third set of experiments, we set $f = 1$.  In this case the QP instances are always feasible.  We verified that both the QP and HQP formulations solved the problem to optimality.  Further, the solutions to QP can be recovered from the HQP could be recovered according to Theorem~\ref{thm:optimal}.  Thre is no significant difference in the number of iterations taken for convergence using either formulation. However, the computational time per iteration for HQP is higher than that of QP formulation.  This is likely due to the matrix in~\eqref{iipm.step} being dense. This can be rectified using a tailored Schur-complement-based implementation for the step computation.  We will investigate this in a future work.

\section{Conclusions \& Future Work}\label{sec:conclusions}

We presented a homogeneous formulation of QP that allows for robust detection of infeasibility in QPs. We also presented an infeasible IPM for the solution of the HQP and showed polynomial iteration complexity. In the context of IPMs, the paper leaves open a number of future avenues for exploration.  Firstly, the sparsity of the linear systems in the step computation of IIPM are now affected by the sparsity of $c, f$ which can be dense. To effectively handle this a tailored linear algebra solution is required. Secondly, the linear system $E$ may have structure such as in the case of MPC.  Exploiting this through a decomposition approach such as in~\cite{RaoWrightRawlings1998} is critical to improving the computational efficiency. Thirdly, the user of predictor-corrector steps will be crucial for improving the practial performance. We will explore these in a future work. The homogeneous formulation can be readily presented to any QP algorithm.  We will also investigate the performance of other QP algorithms on the proposed formulation. 

\bibliographystyle{IEEEtran}
\bibliography{refs}

\end{document}